\DeclareMathOperator{\pd}{proj.dim}
\DeclareMathOperator{\LH}{LH}
\DeclareMathOperator{\spli}{spli}
\DeclareMathOperator{\GInj}{GInj}
\DeclareMathOperator{\GProj}{GProj}
\DeclareMathOperator{\hh}{H}
\DeclareMathOperator{\cd}{cd}
\DeclareMathOperator{\hd}{hd}
\DeclareMathOperator{\Gcd}{Gcd}
\DeclareMathOperator{\GFlat}{GFlat}
\DeclareMathOperator{\sfli}{sfli}
\DeclareMathOperator{\WChar}{WChar/A}
\DeclareMathOperator{\Gpd}{Gpd}
\DeclareMathOperator{\Gid}{Gid}
\DeclareMathOperator{\Ghd}{Ghd}
\DeclareMathOperator{\fgidim}{FinGid}
\DeclareMathOperator{\supr}{sup}
\DeclareMathOperator{\Gfd}{Gfd}
\DeclareMathOperator{\Findim}{FinProjDim}
\DeclareMathOperator{\ffdim}{FinFlatDim}
\DeclareMathOperator{\module}{Mod-}
\DeclareMathOperator{\Char}{Char/A}
\DeclareMathOperator{\FinGpd}{FinGpd}
\DeclareMathOperator{\FinGfd}{FinGfd}
\DeclareMathOperator{\FP}{FP}
\DeclareMathOperator{\fd}{flat.dim}
\DeclareMathOperator{\dm}{-dim}
\DeclareMathOperator{\genf}{Fin}
\DeclareMathOperator{\openfirstbracket}{(}
\DeclareMathOperator{\closefirstbracket}{)}
\theoremstyle{plain}
\newtheorem{theorem}{Theorem}[section]
\newtheorem{lemma}[theorem]{Lemma}
\newtheorem{corollary}[theorem]{Corollary}
\newtheorem{definition}[theorem]{Definition}
\theoremstyle{remark}
\newtheorem{remark}[theorem]{Remark}
\numberwithin{equation}{section}
\begin{document}

\title[Coincidence of classical and Gorenstein concepts]
{Gorenstein versions of type $\Phi$ groups and some Gcd-cd, Ghd-hd coincidences}

\author[Rudradip Biswas]
{Rudradip Biswas }
\address{Department of Mathematics\\
University of Warwick\\
Zeeman Building, Coventry CV4 7AL, UK}
\email{rudradip.biswas@warwick.ac.uk}

\author[Dimitra-Dionysia Stergiopoulou]
{Dimitra-Dionysia Stergiopoulou}
\address{Department of Mathematics\\
University of Athens\\ Athens 15784\\
Greece}
\curraddr{%
	Department of Mathematics\\
	University of Thessaly\\
	Lamia 35100\\
	Greece}
\email{dstergiop@math.uoa.gr}

\subjclass[2010]{Primary: 20C07, Secondary: 18G05, 20K40.}

\date{\today}

\keywords{Gorenstein projectives, characteristic module, cohomological dimension}

\begin{abstract}
{In this short note, we characterise some Gorenstein versions of the concept of a group being of type $\Phi$ as introduced by Olympia Talelli. We also generalize a different result by Talelli regarding the coincidence of the classical and the Gorenstein cohomological dimension of torsion-free groups in Kropholler's $\LH\mathscr{F}$ class.}
\end{abstract}

\maketitle

\section{Introduction}\label{s0}

Groups of type $\Phi$ were introduced by Talelli \cite{t-phi} almost twenty years ago to provide a possible algebraic characterisation of groups $G$ with a finite dimensional model for \underline{E}G. A lot has been studied about the cohomological properties of such groups, and it is also handy that we have an unconditional characterisation of type $\Phi$ groups in terms of the finiteness of a concrete invariant. In this note, we look at Gorenstein versions of this concept and establish such characterisations in terms of analogously defined (co)homological invariants (Proposition \ref{prop-char}). Additionally, we revisit the somewhat classical question of when the cohomological dimension of a group equals its Gorenstein cohomological dimension. In a separate paper \cite[Theorem 3.5.(2)]{talelli-pams}, Talelli showed that for any torsion-free group $G$ in $\LH\mathscr{F}$, with $\mathscr{F}$ denoting the class of all finite groups, the cohomological dimension of $G$ (over $\mathbb{Z}$) agrees with its Gorenstein cohomological dimension (over $\mathbb{Z}$). We generalize this result by first enlarging the ``base" in Kropholler's hierarchy from the class of all finite groups to the class of all type $\Phi$ groups over a commutative ring $A$ satisfying a small cohomological finiteness condition, and then replacing $\mathbb{Z}$ by $A$, and we then prove an analogous version regarding the coincidence of Gorenstein homological dimension and classical homological dimension over $A$ (see Theorem \ref{coincidence}). 

\section{Notations and definitions}

All our modules will be right modules. For any ring $R$, $\spli(R)$ and $\sfli(R)$ respectively denote the supremum over the projective (resp. flat) dimension of $R$-injectives. $\GFlat(R)$, $\GProj(R)$, and $\GInj(R)$ respectively denote the full subcategories of the whole module category comprising of Gorenstein flat $R$-modules, Gorenstein projective $R$-modules, and Gorenstein injective $R$-modules. As expected, for any $R$-module, $\Gpd_R(M)$, $\Gfd_R(M)$, $\Gid_R(M)$ denote its Gorenstein projective dimension, Gorenstein flat dimension, and Gorenstein injective dimension respectively. For any (resolving or coresolving) subcategory $\mathcal{C} \subseteq \module R$, $\mathcal{C}\dm_R(M)$ denotes the minimum length of a resolution (or a coresolution, depending on the context) of $M$ with modules from $\mathcal{C}$. So, when $\mathcal{C}=\GProj(R)$ or $\GInj(R)$, $\mathcal{C}$-dimension of a module is just its Gorenstein projective or Gorenstein injective dimension respectively. $\genf \mathcal{C}\dm (R)$ denotes the ``Finitistic" $\mathcal{C}$-dimension of $R$, i.e. the supremum over all $R$-modules that have finite $\mathcal{C}$-dimension.

When $R$ is a group algebra $AG$ with $A$ is a commutative ring and $G$ any group, $\kappa_{\mathcal{C}\dm}(AG;\mathscr{X})$, for any class of subgroups $\mathscr{X}$ of $G$, is the notation for $\supr_{M \in \module AG}\{ \mathcal{C}\dm_{AG}(M): \mathcal{C}\dm_{AH}(M)<\infty$ for all $H \in \mathscr{X}\}$. For many of these dimensions, studying them over group algebras over finite subgroups of an infinite group is an easier task than studying over the infinite group directly, and that was, in part, the original motivation behind the introduction of these invariants in \cite{ck98} for $\mathcal{C}=AG$-projectives, and $\mathscr{X}=$ the class of finite subgroups of an arbitrary infinite group $G$ - a key use of this invariant in the literature for $A=\mathbb{Z}$ and any infinite group $G$ (i.e. $\kappa_{\pd}(\mathbb{Z}G;\mathscr{F})$, with $\mathscr{F}$ denoting the class of all finite groups) is the formulation of the conjecture (and the evidence towards it) that $\kappa_{\pd}(\mathbb{Z}G;\mathscr{F})<\infty \Leftrightarrow G$ admits a finite-dimensional classifying space of proper actions (see \cite[Conjecture A, Propositions 2.2 and 3.1]{t-phi}). $G$ is said to be of type $\Phi(\mathcal{C}\dm, \mathscr{X})$ if, for any $AG$-module $M$, $\mathcal{C}\dm_{AG}(M)<\infty \Leftrightarrow \mathcal{C}\dm_{AH}(M)<\infty$ for all $H \leqslant G$ such that $H \in \mathscr{X}$.

\begin{definition}
\begin{itemize}
    \item[i.] \textbf{(Kropholler's hierarchy)} For any family of groups $\mathscr{X}$, denote $\hh_0\mathscr{X}:=\mathscr{X}$, and for any successor ordinal $\alpha$, $G$ is said to be in $\hh_{\alpha}\mathscr{X}$ iff there is a finte-dimensional contractible CW-complex on which $G$ acts with stabilizers in $\hh_{\alpha-1}\mathscr{X}$. When $\alpha$ is a limit ordinal, $\hh_{\alpha}\mathscr{X}:=\bigcup_{\beta<\alpha}\hh_{\beta}\mathscr{X}$, and $G$ is said to be in $\hh\mathscr{X}$ if $G\in \hh_{\alpha}\mathscr{X}$ for some ordinal $\alpha$. Finally, we say $G \in \LH\mathscr{X}$ if every finitely generated subgroup of $G$ is in $\hh\mathscr{X}$.
    \item[ii.] \textbf{(Groups with (weak) characteristic modules)} Let $A$ be a commutative ring. The class of groups, $\mathscr{X}_{\Char}$ (groups admitting a characteristic module over $A$), is defined as the class of those groups $G$ such that there is an $A$-projective $AG$-module $M$ of finite projective dimension over $AG$ with an $A$-split $AG$-monomorphism $A \hookrightarrow M$. Similarly, the class $\mathscr{X}_{\WChar}$ (groups admitting a weak characteristic module over $A$) is the class of those groups $G$ such that there is an $A$-flat $AG$-module $N$ of finite flat dimension over $AG$ with an $A$-pure $AG$-monomorphism $A \hookrightarrow N$. 

\end{itemize}
    
\end{definition}

Since the nineties when Kropholler's hierarchy was first introduced in \cite{krop94}, a lot of progress has been made in understanding a wide range of cohomological behaviour of infinite groups lying in $\LH\mathscr{F}$, and it is also known that the classes $\{\hh_n\mathscr{F}\}_{n \in \mathbb{N}}$ keep getting strictly bigger as $n$ increases \cite{jkl}. In our paper, whenever we make our groups belong to Kropholler's hierarchy, the base class of groups is a much larger class than the class of all finite groups. 

The motivation behind the language of (weak) characteristic modules is that for any fixed commutative ring $A$ and many infinite groups $G$ satisfying some nice cohomological properties, the $AG$-module $B(G,A)$, which is given by the $G \rightarrow A$ functions of finite range, can be shown to satisfy the properties that we are demanding our characteristic or weak characteristic modules to satisfy. The term ``characteristic module" is from \cite{tal-char}, and ``weak" characteristic modules were first studied in \cite{dds}.

\section{New results}

We first start by putting together some results, mostly recent from the work of the second author, that will be quite useful for us.

\begin{theorem}\label{char-wchar} Let $G$ be a group and let $A$ be a commutative ring.
\begin{itemize}

\item[i.]  $\supr\{\Gpd_{AG}(M): M \in \module AG \}= \supr\{\Gid_{AG}(M): M \in \module AG \}=\spli(AG)$ $\openfirstbracket$this can be seen combining \cite[Theorem 4.1]{e} and \cite[Corollary 24]{de}$\closefirstbracket$, and $\supr\{\Gfd_{AG}(M):M\in \module AG\}=\sfli(AG)$ $\openfirstbracket$follows from \cite[Theorem 5.3]{e}$\closefirstbracket$.

\item[ii.] For $\spli(A)<\infty$, we have $G\in \mathscr{X}_{\Char} \Longleftrightarrow \spli(AG)<\infty$ - \cite[Theorem 2.14]{dds-ja}.

\item[iii.] For $\sfli(A)<\infty$, we have $G\in \mathscr{X}_{\WChar}\Longleftrightarrow \sfli(AG)<\infty$ - \cite[Theorem 3.14]{ks}.

\end{itemize}

\end{theorem}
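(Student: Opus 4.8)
\quad Each of (i)--(iii) is a repackaging of a result already available in the literature, so my plan is not to prove anything from scratch but to assemble the cited statements and to verify that they hold in the generality we use (arbitrary commutative ring $A$, arbitrary group $G$); there is no new argument to produce.

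For (i) I would begin with the two inequalities that hold over any ring and are immediate from the $\Ext$-vanishing characterisations of Gorenstein dimension: if every $AG$-module has Gorenstein injective dimension at most $n$, then $\Ext^{i}_{AG}(I,-)=0$ for all $i>n$ and every injective $AG$-module $I$, i.e. $\pd_{AG}(I)\le n$, so $\spli(AG)\le\supr\{\Gid_{AG}(M):M\in\module AG\}$; dually $\silp(AG)\le\supr\{\Gpd_{AG}(M):M\in\module AG\}$. The reverse inequality --- that finiteness of $\spli(AG)$ forces a uniform bound on Gorenstein injective dimensions, so that the common value of the two suprema is exactly $\spli(AG)$ --- is \cite[Theorem 4.1]{e}, while the symmetry $\supr\{\Gpd_{AG}(M)\}=\supr\{\Gid_{AG}(M)\}$ of the Gorenstein global dimension is \cite[Corollary 24]{de}; putting these facts together gives the displayed chain of equalities, the case in which the common value is $\infty$ being already covered by the two trivial inequalities (so that, as claimed, no finiteness hypothesis on $A$ is needed here). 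The flat statement $\supr\{\Gfd_{AG}(M):M\in\module AG\}=\sfli(AG)$ is the precise analogue with Gorenstein flat modules and the flat dimension of injectives in place of the projective one, and I would just cite \cite[Theorem 5.3]{e}.

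For (ii) and (iii) I would invoke \cite[Theorem 2.14]{dds-ja} and \cite[Theorem 3.14]{ks} directly. The two directions of (ii) run as follows: from a characteristic module $M$ over $A$ (an $A$-free $AG$-module of finite projective dimension over $AG$ with an $A$-split monomorphism $A\hookrightarrow M$) one bounds $\pd_{AG}$ of an arbitrary injective $AG$-module in terms of $\pd_{AG}(M)$, the hypothesis $\spli(A)<\infty$ entering in order to control such injectives after restriction to $A$ and passage along the inclusion $A\hookrightarrow AG$; conversely, when $\spli(AG)<\infty$ one produces a characteristic module, typically the $AG$-module $B(G,A)$ of finite-range functions $G\to A$ (which always carries an $A$-split inclusion of the constants and whose $AG$-projective dimension becomes finite under this hypothesis) or one built by dimension shifting on a projective resolution of the trivial module. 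Statement (iii) is the $A$-pure, $A$-flat analogue, with $\sfli$ and $\mathscr{X}_{\WChar}$ replacing $\spli$ and $\mathscr{X}_{\Char}$.

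The only place I expect any friction, and so the ``main obstacle'', is bookkeeping rather than mathematics: the cited sources mix the $\silp/\spli$ notation, the ``Gorenstein global dimension'' notation, and --- in the group-ring results --- finiteness hypotheses phrased on $A$ rather than on $AG$, so I would need to confirm that all of these match up and that none of the cited theorems quietly assumes $A$ to be Noetherian or coherent where we do not want to assume it.
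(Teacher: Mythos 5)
Your proposal is correct and matches the paper exactly: the paper gives no independent proof of this theorem, only the parenthetical citations to \cite[Theorem 4.1]{e}, \cite[Corollary 24]{de}, \cite[Theorem 5.3]{e}, \cite[Theorem 2.14]{dds-ja} and \cite[Theorem 3.14]{ks}, which is precisely the citation-assembly you describe. Your added sketches of the easy inequalities in (i) and of the two directions of (ii) are reasonable supplementary commentary but are not part of (and not needed for) the paper's argument.
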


The following can be seen as an easy consequence of Theorem \ref{char-wchar}.

\begin{corollary}\label{k-spli} Let $G$ be a group and let $A$ be a commutative ring. 

\begin{itemize}
\item[i.] If $\spli(A)< \infty$, then $\kappa_{\Gpd}(AG;\mathscr{X})=\spli(AG) = \kappa_{\Gid}(AG;\mathscr{X})$ for any $\mathscr{X}\subseteq \mathscr{X}_{\Char}$, and if $G \in \LH\mathscr{X}_{\Char}$, then $\kappa_{\Gpd}(AG;\mathscr{X}_{\Char}) = \kappa_{\pd}(AG;\mathscr{X}_{\Char})$.

\item[ii.] If $\sfli(A)<\infty$, then $\kappa_{\Gfd}(AG;\mathscr{X})=\sfli(AG)$ for any $\mathscr{X}\subseteq \mathscr{X}_{\WChar}$, and if $G \in \LH\mathscr{X}_{\WChar}$, then $\kappa_{\Gfd}(AG;\mathscr{X}_{\WChar}) = \kappa_{\fd}(AG;\mathscr{X}_{\WChar})$.

\end{itemize}
\end{corollary}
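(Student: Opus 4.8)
The plan is to deduce Corollary \ref{k-spli} from Theorem \ref{char-wchar} by two observations. First, for the equalities $\kappa_{\Gpd}(AG;\mathscr{X})=\spli(AG)=\kappa_{\Gid}(AG;\mathscr{X})$ when $\spli(A)<\infty$: the key point is that membership in $\mathscr{X}_{\Char}$ propagates to subgroups, so every $H\in\mathscr{X}\subseteq\mathscr{X}_{\Char}$ has $\spli(AH)<\infty$ by Theorem \ref{char-wchar}(ii) (using $\spli(A)<\infty$), whence by Theorem \ref{char-wchar}(i) every $AH$-module has finite Gorenstein projective (and Gorenstein injective) dimension, bounded by $\spli(AH)$. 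Thus the defining condition in the supremum $\kappa_{\Gpd}(AG;\mathscr{X})=\sup_{M}\{\Gpd_{AG}(M):\Gpd_{AH}(M)<\infty\ \forall H\in\mathscr{X}\}$ is automatically satisfied for \emph{all} $AG$-modules $M$, so $\kappa_{\Gpd}(AG;\mathscr{X})=\sup\{\Gpd_{AG}(M):M\in\module AG\}$, which equals $\spli(AG)$ again by Theorem \ref{char-wchar}(i). The same argument with $\Gid$ in place of $\Gpd$ gives the other equality. The $\sfli$ statement in part (ii) is entirely parallel, using Theorem \ref{char-wchar}(iii) and the $\Gfd$ half of Theorem \ref{char-wchar}(i), plus the fact that $\mathscr{X}_{\WChar}$ is also subgroup-closed.

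Second, for the coincidence $\kappa_{\Gpd}(AG;\mathscr{X}_{\Char})=\kappa_{\pd}(AG;\mathscr{X}_{\Char})$ when $G\in\LH\mathscr{X}_{\Char}$: here I would use the standard comparison $\Gpd_{AG}(M)\leqslant\pd_{AG}(M)$ (giving $\kappa_{\Gpd}\leqslant\kappa_{\pd}$ trivially, and also $\{M:\pd_{AH}(M)<\infty\}\subseteq\{M:\Gpd_{AH}(M)<\infty\}$, so no issue there either) together with the fact that over a ring of finite $\spli$, finite Gorenstein projective dimension forces finite projective dimension to go the other way. Concretely, $\spli(AG)<\infty$ implies that for any $AG$-module $M$ one has $\pd_{AG}(M)<\infty\Leftrightarrow\Gpd_{AG}(M)<\infty$, and in that case the two dimensions agree (this is the usual Gorenstein-projective versus projective dimension dichotomy for rings with $\spli$ finite). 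Since $G\in\LH\mathscr{X}_{\Char}$ and $\mathscr{X}_{\Char}$ has the subgroup-closure and the union-closure needed to apply part (i) (or directly $\spli(AG)<\infty$ when $G$ itself lies in $\mathscr{X}_{\Char}$), one concludes $\sup\{\Gpd_{AG}(M)\}=\sup\{\pd_{AG}(M):\pd_{AG}(M)<\infty\}=\Findim(AG)$ on the relevant class, and the two $\kappa$-invariants coincide. The homological version in (ii) runs identically with flat dimension, $\sfli$, and $\mathscr{X}_{\WChar}$.

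The main obstacle I anticipate is purely bookkeeping rather than conceptual: one must be careful that the condition ``$G\in\LH\mathscr{X}_{\Char}$'' is genuinely used (and sufficient) to guarantee $\spli(AG)<\infty$ — this is where Kropholler-hierarchy machinery enters, since being in $\LH\mathscr{X}_{\Char}$ does not immediately put $G$ itself in $\mathscr{X}_{\Char}$. I would invoke the known behaviour of $\spli$ (equivalently of $\kappa_{\pd}$ and $\kappa_{\Gpd}$) along the Kropholler hierarchy: the $\kappa$-invariant of a group acting on a finite-dimensional contractible complex is controlled by those of the stabilizers plus the dimension of the complex, and this propagates up the hierarchy and across the local (``$\mathrm{L}$'') step, so $G\in\LH\mathscr{X}_{\Char}$ with $\spli(A)<\infty$ yields $\kappa_{\pd}(AG;\mathscr{X}_{\Char})<\infty$ and the same for $\Gpd$. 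Once finiteness is in hand, the equality $\kappa_{\Gpd}=\kappa_{\pd}$ follows from the dimension-coincidence quoted above applied uniformly. I would state this carefully but not reprove the hierarchy estimates, citing the relevant sources.

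Finally, I would remark that since $\mathscr{X}_{\Char}$ and $\mathscr{X}_{\WChar}$ are both closed under taking subgroups and under directed unions (hence $\LH$ of them behaves well), the hypotheses are self-consistent, and that the corollary is indeed ``easy'' in the sense that all the real content sits in Theorem \ref{char-wchar}; the proof is a matter of unwinding the definition of $\kappa_{\mathcal{C}\dm}$ and observing that the side condition in the supremum becomes vacuous on the class of modules in play.
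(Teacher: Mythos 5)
Your treatment of the first halves of (i) and (ii) is exactly the paper's argument: by Theorem \ref{char-wchar}(ii)--(iii) every $H\in\mathscr{X}\subseteq\mathscr{X}_{\Char}$ (resp. $\mathscr{X}_{\WChar}$) has $\spli(AH)<\infty$ (resp. $\sfli(AH)<\infty$), so by Theorem \ref{char-wchar}(i) the side condition in the supremum defining $\kappa_{\Gpd}$, $\kappa_{\Gid}$, $\kappa_{\Gfd}$ is vacuous and the suprema collapse to $\spli(AG)$, resp. $\sfli(AG)$. No issues there.

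The second halves are where your argument has a genuine gap. You assert that $\spli(AG)<\infty$ implies $\pd_{AG}(M)<\infty\Leftrightarrow\Gpd_{AG}(M)<\infty$ for every $M$, and you lean on this ``dichotomy'' to pass from $\kappa_{\Gpd}$ to $\kappa_{\pd}$. Only one direction of that equivalence is true: if $\pd_{AG}(M)<\infty$ then $\Gpd_{AG}(M)=\pd_{AG}(M)$. The converse fails badly: when $\spli(AG)<\infty$ \emph{every} $AG$-module has finite Gorenstein projective dimension (this is precisely Theorem \ref{char-wchar}(i)), so your claimed equivalence would force $AG$ to have finite global dimension. Concretely, for $A=\mathbb{Z}$ and $G$ a nontrivial finite group one has $\spli(\mathbb{Z}G)=1$ and $\Gpd_{\mathbb{Z}G}(\mathbb{Z})=0$, yet $\pd_{\mathbb{Z}G}(\mathbb{Z})=\infty$. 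Applied ``uniformly'' as you propose, the false direction is exactly what you would need to bound $\kappa_{\Gpd}$ by $\kappa_{\pd}$, so the bridge does not stand as written. The paper avoids this entirely: it quotes the hierarchy result in its full strength, namely $\kappa_{\pd}(AG;\mathscr{X}_{\Char})=\spli(AG)$ for $G\in\LH\mathscr{X}_{\Char}$ (the cited Remark 6.9 of the Stergiopoulou paper, restated as Lemma \ref{69} here), and then both sides equal $\spli(AG)$ by the first half. Your plan of importing only the \emph{finiteness} of $\kappa_{\pd}$ from the hierarchy machinery can be repaired, but it requires an argument you did not give: from finiteness, every $M$ with $\pd_{AH}(M)<\infty$ for all $H$ has $\pd_{AG}(M)<\infty$, hence $\Gpd_{AG}(M)=\pd_{AG}(M)$, giving $\kappa_{\pd}\leq\kappa_{\Gpd}$; and for the reverse one must observe that every injective $AG$-module lies in the class defining $\kappa_{\pd}$ (again by Theorem \ref{char-wchar}(ii)), so $\spli(AG)\leq\kappa_{\pd}(AG;\mathscr{X}_{\Char})$. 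Either supply that argument or cite the equality $\kappa_{\pd}(AG;\mathscr{X}_{\Char})=\spli(AG)$ directly; the intermediate ``dichotomy'' must be dropped.
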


\begin{proof} \begin{itemize}

\item[i.] Theorem \ref{char-wchar} implies that for $\spli(A)<\infty$ and $\mathscr{X}\subseteq \mathscr{X}_{\Char}$, $\kappa_{\Gpd}(AG;\mathscr{X})=\supr\{\Gpd_{AG}(M): M \in \module AG \}= \supr\{\Gid_{AG}(M): M \in \module AG \}=\kappa_{\Gid}(AG;\mathscr{X})$ (more precisely, using Theorem \ref{char-wchar}.i-ii, we see that all $AG$-modules, when restricted to subgroups in $\mathscr{X}_{\Char}$, have finite $\Gpd$ and finite $\Gid$), and so they are all equal to $\spli(AG)$. When $G \in \LH\mathscr{X}_{\Char}$, we can use \cite[Remark 6.9.ii]{dds} to deduce $\kappa_{\pd}(AG;\mathscr{X}_{\Char}) = \spli(AG)$, so we are done. 

\item[ii.] When $\sfli(A)<\infty$, using Theorem \ref{char-wchar}.i and Theorem \ref{char-wchar}.iii, we see that all $AG$-modules, when restricted to subgroups in $\mathscr{X}_{\WChar}$, have finite $\Gfd$. Now, by Theorem \ref{char-wchar}.i., we have $\kappa_{\Gfd}(AG;\mathscr{X})=\supr\{\Gfd_{AG}(M):M\in \module AG\} = \sfli(AG)$ for any $\mathscr{X}\subseteq \mathscr{X}_{\WChar}$. When $G \in \LH\mathscr{X}_{\WChar}$, we can use \cite[Remark 6.9.i]{dds} to deduce $\kappa_{\fd}(AG;\mathscr{X}_{\WChar}) = \sfli(AG)$, so we are done. 
\end{itemize} 

\end{proof}

Now, we prove our main characterisation result. 

\begin{theorem}\label{prop-char} Let $G$ be a group, let $A$ be a commutative ring. First, let $\spli(A)<\infty$, and $\mathscr{X}\subseteq \mathscr{X}_{\Char}$ be a class of groups . Then, 

\begin{itemize}

\item[i.] $\kappa_{\Gpd}(AG;\mathscr{X})<\infty \Leftrightarrow G$ is of type $\Phi(\Gpd, \mathscr{X})$ over $A$.

\item[ii.] $\kappa_{\Gid}(AG;\mathscr{X})<\infty \Leftrightarrow G$ is of type $\Phi(\Gid, \mathscr{X})$ over $A$.

\end{itemize}

Now, let $\sfli(A)<\infty$ and $\mathscr{X}\subseteq \mathscr{X}_{\WChar}$. Then,

\begin{itemize}
\item[iii.] $\kappa_{\Gfd}(AG;\mathscr{X})<\infty \Leftrightarrow G$ is of type $\Phi(\Gfd, \mathscr{X})$ over $A$.
\end{itemize}

\end{theorem}
\begin{proof} The ``$\Rightarrow$" direction in $(i)-(iii)$ is easy because Gorenstein projectivity (resp. Gorenstein injectivity and Gorenstein flatness) is preserved under restriction to subgroups in $\mathscr{X}_{\Char}$ (resp. $\mathscr{X}_{\Char}$ and $\mathscr{X}_{\WChar}$ - see \cite[Corollaries 4.11, 5.7 and 3.7]{dds}).

\textbf{Main principle for the other direction:} ``Let $\mathcal{C}$ be a full subcategory of $\module AG$ that is either $\GProj(AG)$, $\GFlat(AG)$ or $\GInj(AG)$. Let $\mathscr{X}$ be a class of subgroups of $G$ such that, for any $H \in \mathscr{X}$, $\supr \{\mathcal{C}\dm_{AH}(M): M \in \module AH\}$ is finite (\textbf{Hypothesis H}). Now, if $\genf\mathcal{C}\dm(AG)$ is not finite, then $\exists M \in \module AG$ such that $\mathcal{C}\dm_{AG}(M)$ is not finite but $\mathcal{C}\dm_{AH}(M)$ is finite for all $H\in \mathscr{X}$. This is easy to see because if $\genf\mathcal{C}\dm(AG)$ is not finite, then for any $n$, we can take an $AG$-module $M_n$ such that $n \leq \mathcal{C}\dm_{AG}(M_n)<\infty$, and then we can choose $M$ to be $\bigoplus_n M_n$ for $\mathcal{C}=\GProj(AG)$ or $\GFlat(AG)$ and $\prod_n M_n$ for $\mathcal{C}=\GInj(AG)$. It is clear that $\mathcal{C}\dm_{AG}(M)$ is not finite, whereas for any $H\in \mathscr{X}$, $\mathcal{C}\dm_{AH}(M)\leq \supr_n \mathcal{C}\dm_{AH}(M_n)<\infty$ (by Hypothesis H)."

For the ``$\Leftarrow$" direction in $(i)-(iii)$, first note that when $G$ is of type $\Phi(\Gpd,\mathscr{X})$ (resp. type $\Phi(\Gid,\mathscr{X})$ or $\Phi(\Gfd,\mathscr{X})$) over $A$, $\kappa_{\Gpd}(AG;\mathscr{X}) = \FinGpd(AG)$ (resp. $\kappa_{\Gid}(AG;\mathscr{X}) = \fgidim(AG)$ and $\kappa_{\Gfd}(AG;\mathscr{X}) = \FinGfd(AG)$). Now, to use the above ``Main Principle" and finish the proof, we just have to check that for the $A$ and $\mathscr{X}$ in $(i)-(iii)$, \textbf{Hypothesis H} is satisfied for $\mathcal{C}=\GProj(AG)$, $\GInj(AG)$ and $\GFlat(AG)$ respectively, and that is checked easily by Theorem \ref{char-wchar}.

\end{proof}

Combining Theorem \ref{prop-char} with Theorem \ref{char-wchar} and Corollary \ref{k-spli}, we get the following:
\begin{corollary} Let $A$ be a commutative ring. 

\begin{itemize}
\item[i.] If $\spli(A)<\infty$, then $\{$Groups of type $\Phi(\Gpd,\mathscr{X})$ over $A\}=\{$Groups of type $\Phi(\Gid,\mathscr{X})$ over $A\}=\mathscr{X}_{\Char}$, for any $\mathscr{X}\subseteq \mathscr{X}_{\Char}$. So, taking $\mathscr{X}=\mathscr{F}$ or $\mathscr{F}_{\phi,A}$ or all of $\mathscr{X}_{\Char}$ does not change the class.

\item[ii.] If $\sfli(A)<\infty$, then $\mathscr{X}_{\WChar}=\{$Groups of type $\Phi(\Gfd,\mathscr{X})$ over $A\}$, for any $\mathscr{X}\subseteq \mathscr{X}_{\WChar}$. Again, taking $\mathscr{X}=\mathscr{F},\mathscr{F}_{\phi,A},\mathscr{X}_{\Char}$ or all of $\mathscr{X}_{\WChar}$ does not change the class.

\end{itemize}
\end{corollary}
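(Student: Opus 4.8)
The plan is to chain the three results just assembled, each of which rewrites one of the notions appearing in the statement as a finiteness condition on $\spli$ or $\sfli$ of the group algebra. \textbf{Part (i).} Fix $A$ with $\spli(A)<\infty$ and a class $\mathscr{X}\subseteq\mathscr{X}_{\Char}$. For an arbitrary group $G$ I would run the string of equivalences: $G$ is of type $\Phi(\Gpd,\mathscr{X})$ over $A$ $\iff$ $\kappa_{\Gpd}(AG;\mathscr{X})<\infty$ (Proposition \ref{prop-char}.i) $\iff$ $\spli(AG)<\infty$ (Corollary \ref{k-spli}.i, which gives $\kappa_{\Gpd}(AG;\mathscr{X})=\spli(AG)$) $\iff$ $G\in\mathscr{X}_{\Char}$ (Theorem \ref{char-wchar}.ii, applicable since $\spli(A)<\infty$). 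This gives $\{$groups of type $\Phi(\Gpd,\mathscr{X})$ over $A\}=\mathscr{X}_{\Char}$. The identical chain with Proposition \ref{prop-char}.ii and the clause $\kappa_{\Gid}(AG;\mathscr{X})=\spli(AG)$ of Corollary \ref{k-spli}.i yields $\{$groups of type $\Phi(\Gid,\mathscr{X})$ over $A\}=\mathscr{X}_{\Char}$, so all three classes coincide.

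\textbf{Part (ii).} Fix $A$ with $\sfli(A)<\infty$ and $\mathscr{X}\subseteq\mathscr{X}_{\WChar}$. The same three steps apply verbatim: $G$ is of type $\Phi(\Gfd,\mathscr{X})$ over $A$ $\iff$ $\kappa_{\Gfd}(AG;\mathscr{X})<\infty$ (Proposition \ref{prop-char}.iii) $\iff$ $\sfli(AG)<\infty$ (Corollary \ref{k-spli}.ii) $\iff$ $G\in\mathscr{X}_{\WChar}$ (Theorem \ref{char-wchar}.iii). Hence $\mathscr{X}_{\WChar}=\{$groups of type $\Phi(\Gfd,\mathscr{X})$ over $A\}$.

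For the closing remarks in both parts, the key observation is that the right-hand side of each of these identities ($\mathscr{X}_{\Char}$ in (i), $\mathscr{X}_{\WChar}$ in (ii)) does not mention $\mathscr{X}$, so two admissible choices of $\mathscr{X}$ produce the same class of groups of type $\Phi$; in particular replacing $\mathscr{X}$ by $\mathscr{F}$, by $\mathscr{F}_{\phi,A}$, or by the full class $\mathscr{X}_{\Char}$ (resp. $\mathscr{X}_{\WChar}$) changes nothing. To see these are legitimate choices one only needs the inclusions $\mathscr{F}\subseteq\mathscr{F}_{\phi,A}\subseteq\mathscr{X}_{\Char}\subseteq\mathscr{X}_{\WChar}$ (finite groups admit characteristic modules over $A$; a type $\Phi$ group over $A$ admits one under the ambient finiteness hypothesis; and a characteristic module is in particular a weak characteristic module). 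This bookkeeping with the inclusions is the only place that calls for any attention; the rest is a direct substitution of the three cited results, so I do not expect a genuine obstacle.
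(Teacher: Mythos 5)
Your proof is correct and is exactly the combination the paper intends: the corollary is stated in the paper with no written proof beyond ``combining Proposition \ref{prop-char} with Theorem \ref{char-wchar} and Corollary \ref{k-spli}'', and your three-step chain (type $\Phi$ $\Leftrightarrow$ finiteness of $\kappa$ $\Leftrightarrow$ finiteness of $\spli(AG)$ resp.\ $\sfli(AG)$ $\Leftrightarrow$ membership in $\mathscr{X}_{\Char}$ resp.\ $\mathscr{X}_{\WChar}$) is precisely that combination, with the observation about the right-hand side being independent of $\mathscr{X}$ handling the closing remarks.
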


\begin{remark} It is easy to see and has been known for a while that for any group $G$ and any commutative ring $A$ of finite global dimension, $\kappa_{\pd}(AG;\mathscr{F})<\infty$ iff $G$ is classically of type $\Phi$ over $A$ (for a precise reference, see \cite[Lemma 4.1]{b1}). Theorem \ref{prop-char} was therefore very much expected as Gorenstein analogues of this classical type $\Phi$ characterisation. However, it is still open as to whether $\spli(AG)<\infty \Leftrightarrow G$ is (classically) of type $\Phi$ over $A$ (we are still sticking to $A$ having finite global dimension); only the ``$\Leftarrow$" direction is known to be true unconditionally. In this context, it is interesting to note in light of Corollary \ref{k-spli} and Theorem \ref{prop-char} that any Gorenstein analogue of type $\Phi$ behaviour is equivalent to the group algebra having finite $\spli$ or $\sfli$.

\end{remark}

\begin{lemma}\label{69} 
 These two results have been proved for $\mathscr{X}=\mathscr{X}_{\Char}$ and $\mathscr{X} = \mathscr{X}_{\WChar}$ respectively in \cite[Remark 6.9]{dds}, and the same proofs work here.
 \begin{itemize}
 	\item[i.] If $\spli(A)<\infty$, then for any $\mathscr{X}\subseteq \mathscr{X}_{\Char}$ and $G \in \LH\mathscr{X}$, $\kappa_{\pd}(AG;\mathscr{X}) = \spli(AG) = \Findim(AG)$.
 
 \item[ii.] If $\sfli(A)<\infty$, then for any $\mathscr{X}\subseteq \mathscr{X}_{\WChar}$ and $G\in \LH\mathscr{X}$, $\kappa_{\fd}(AG;\mathscr{X}) = \sfli(AG) = \ffdim(AG)$.
\end{itemize}

\end{lemma}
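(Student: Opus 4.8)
Since the two statements are, as indicated, the assertions of \cite[Remark 6.9]{dds} for $\mathscr{X}=\mathscr{X}_{\Char}$ and $\mathscr{X}=\mathscr{X}_{\WChar}$ respectively, the plan is to check that the base class enters those arguments only through one fact — that $\spli(AH)<\infty$ for every group $H$ in it in case (i), respectively $\sfli(AH)<\infty$ in case (ii) — and that this persists for an arbitrary subclass $\mathscr{X}\subseteq\mathscr{X}_{\Char}$ (resp.\ $\mathscr{X}\subseteq\mathscr{X}_{\WChar}$) by Theorem \ref{char-wchar}.ii (resp.\ \ref{char-wchar}.iii), once $\spli(A)<\infty$ (resp.\ $\sfli(A)<\infty$). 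Granting that, I would simply reproduce the chain of comparisons behind the equality, which I recall here in case (i); case (ii) is identical with ``flat'' replacing ``projective'', $\sfli$ replacing $\spli$, $\ffdim$ replacing $\Findim$, and the evident flat analogues of the induction and restriction facts.

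Two of the inequalities are formal. Since $AG$ is free over $AH$, $AG$-projectives restrict to $AH$-projectives, so $\pd_{AH}(M)\le\pd_{AG}(M)$ for every $M$; hence every $AG$-module of finite projective dimension lies in the family over which $\kappa_{\pd}(AG;\mathscr{X})$ is formed, giving $\Findim(AG)\le\kappa_{\pd}(AG;\mathscr{X})$. Likewise restriction along $AH\hookrightarrow AG$ preserves injectivity (its left adjoint $AG\otimes_{AH}-$ is exact), so any injective $AG$-module $I$ satisfies $\pd_{AH}(I)\le\spli(AH)<\infty$ for each $H\in\mathscr{X}$ and therefore also lies in that family, giving $\spli(AG)\le\kappa_{\pd}(AG;\mathscr{X})$. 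For the reverse, take an $AG$-module $M$ with $\pd_{AH}(M)<\infty$ for all $H\in\mathscr{X}$: using $G\in\LH\mathscr{X}$, the Kropholler-hierarchy induction up the classes $\hh_{\alpha}\mathscr{X}$ — at each stage tensoring $M$ with the augmented cellular chain complex of a finite-dimensional contractible $G$-CW-complex whose stabilizers lie in $\hh_{\alpha-1}\mathscr{X}$, and using $\pd_{AG}(AG\otimes_{AK}N)=\pd_{AK}(N)$ for a stabilizer $K$, with a filtered-colimit step to pass from $\hh\mathscr{X}$ to $\LH\mathscr{X}$ — yields $\pd_{AG}(M)<\infty$, and when $\spli(AG)<\infty$ the Gedrich--Gruenberg-type fact that a module of finite projective dimension over a ring $R$ has projective dimension at most $\spli(R)$ upgrades this to $\pd_{AG}(M)\le\spli(AG)$; so $\kappa_{\pd}(AG;\mathscr{X})\le\spli(AG)$. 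It remains to compare with $\Findim(AG)$: if $\spli(AG)<\infty$ then every injective has projective dimension $\le\Findim(AG)$, so $\spli(AG)\le\Findim(AG)$ and the whole chain collapses to equalities; and if $\spli(AG)=\infty$ then — an injective of infinite projective dimension being impossible by the Kropholler step just applied to injectives, which have finite projective dimension over each $H\in\mathscr{X}$ — $\spli(AG)$ is the supremum of the unbounded but finite projective dimensions of injectives, whence $\Findim(AG)=\kappa_{\pd}(AG;\mathscr{X})=\infty$ as well.

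The content is concentrated in the Kropholler-hierarchy step, which is where $G\in\LH\mathscr{X}$ is used and which is carried over wholesale from \cite{dds}; the only point to verify — and this is exactly what ``the same proof works here'' amounts to — is that this induction sees the base class solely through the hypothesis $\pd_{AH}(M)<\infty$ (resp.\ $\fd_{AH}(M)<\infty$) for $H\in\mathscr{X}$, and that the ancillary fact $\spli(AH)<\infty$ (resp.\ $\sfli(AH)<\infty$) used to place injectives in the family defining $\kappa_{\pd}$ (resp.\ $\kappa_{\fd}$) is, by Theorem \ref{char-wchar}, unaffected by shrinking the class from $\mathscr{X}_{\Char}$ (resp.\ $\mathscr{X}_{\WChar}$) to any $\mathscr{X}$ within it. I expect no obstacle beyond faithfully reproducing that machinery.
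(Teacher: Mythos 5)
Your proposal is correct and takes essentially the same route as the paper: the paper's entire proof of Lemma \ref{69} is the observation that \cite[Remark 6.9]{dds} establishes both equalities for the full classes $\mathscr{X}_{\Char}$ and $\mathscr{X}_{\WChar}$ and that the same arguments go through verbatim for any subclass, which is precisely what you verify (noting, via Theorem \ref{char-wchar}, that the base class enters only through the finiteness of $\spli(AH)$, resp.\ $\sfli(AH)$, and through the hypothesis $G\in\LH\mathscr{X}$ feeding the hierarchy induction). Your additional unwinding of the chain of inequalities is more detail than the paper gives but is consistent with the cited machinery.
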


Now, we can prove our final promised result generalizing \cite[Theorem 3.5]{talelli-pams}.

\begin{theorem}\label{coincidence} Let $A$ be a commutative ring and let $G$ be a torsion-free group. Let the class of classical type $\Phi$ groups, i.e. groups of type $\Phi(\pd,\mathscr{F})$ over $A$, be denoted $\mathscr{F}_{\phi,A}$. Then, 
\begin{itemize}
\item[i.] For $\spli(A)<\infty$, $\Gcd_A(G)=\cd_A(G)$ as long as $G \in \LH\mathscr{F}_{\phi,A}$. 
\item[ii.] For $\sfli(A)<\infty$, $\Ghd_A(G)=\hd_A(G)$ as long as $G \in \LH\mathscr{F}_{\phi,A}$.
\end{itemize}
\end{theorem}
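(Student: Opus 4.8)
The plan is to reduce the Gorenstein-to-classical comparison to the already-established identity $\kappa_{\pd}(AG;\mathscr{X}) = \spli(AG)$ (respectively $\kappa_{\fd}(AG;\mathscr{X}) = \sfli(AG)$) from Lemma \ref{69}, using the fact that $G$ is torsion-free to convert finite \emph{Gorenstein} dimension into finite \emph{classical} dimension, and then to exploit local finiteness to pass from the Kropholler hierarchy down to the base class. I treat part (i) in detail; part (ii) is the verbatim flat analogue.

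\textbf{Step 1: Reduction via the base class.} First observe that $\mathscr{F} \subseteq \mathscr{F}_{\phi,A}$: every finite group is classically of type $\Phi$ over $A$ when $\spli(A)<\infty$ (indeed $\spli(AF)<\infty$ for $F$ finite, since $AF$ is then a ring with $\spli$ controlled by $\spli(A)$), so $\LH\mathscr{F}_{\phi,A}$ genuinely enlarges $\LH\mathscr{F}$. Next I would check $\mathscr{F}_{\phi,A} \subseteq \mathscr{X}_{\Char}$: if $G$ is classically of type $\Phi(\pd,\mathscr{F})$ over $A$, then $\kappa_{\pd}(AG;\mathscr{F})<\infty$, and since $\spli(A)<\infty$ this forces $\spli(AG)<\infty$, whence $G\in\mathscr{X}_{\Char}$ by Theorem \ref{char-wchar}.ii. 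Therefore $\LH\mathscr{F}_{\phi,A} \subseteq \LH\mathscr{X}_{\Char}$, and for any $G$ in this class all the machinery of Corollary \ref{k-spli} and Lemma \ref{69} applies with $\mathscr{X} = \mathscr{F}_{\phi,A}$.

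\textbf{Step 2: The two inequalities.} The inequality $\Gcd_A(G) \leq \cd_A(G)$ is general (Gorenstein projective dimension never exceeds projective dimension), so the content is $\cd_A(G) \leq \Gcd_A(G)$. Assume $\Gcd_A(G) = \Gpd_{AG}(A) = n < \infty$ (if it is infinite there is nothing to prove). For each finitely generated subgroup $K$ of $G$ we have $K\in\hh\mathscr{F}_{\phi,A}$, and the restriction $A$ (the trivial module) over $AK$ still has $\Gpd_{AK}(A)\leq n<\infty$; since $K$ is torsion-free and lies in the Kropholler hierarchy over a base of type $\Phi$ groups, I want to conclude $\pd_{AK}(A)<\infty$ and in fact $\pd_{AK}(A) = \Gpd_{AK}(A)$. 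This is exactly the point where the torsion-freeness is used: over a torsion-free group in a suitable hierarchy, finiteness of Gorenstein cohomological dimension implies finiteness of classical cohomological dimension, because $\spli(AK) = \Findim(AK) = \kappa_{\pd}(AK;\mathscr{F}_{\phi,A})$ by Lemma \ref{69}, and a module of finite Gorenstein projective dimension over $AK$ whose restrictions to the (torsion-free) base groups have finite projective dimension must itself have finite projective dimension. Then $\cd_A(G) = \sup_K \cd_A(K) = \sup_K \pd_{AK}(A)$, and each term equals $\Gpd_{AK}(A) \leq n$, giving $\cd_A(G)\leq n = \Gcd_A(G)$.

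\textbf{Step 3: Identifying classical and Gorenstein dimension over the base.} The crux, expanded: for a \emph{torsion-free} group $K \in \hh\mathscr{F}_{\phi,A}$, one shows $\Gpd_{AK}(M) = \pd_{AK}(M)$ whenever the left side is finite. The standard route is that $\spli(AK)<\infty$ (which holds since $K\in\mathscr{X}_{\Char}$), so $\sup\{\Gpd_{AK}(M)\} = \spli(AK)$ by Theorem \ref{char-wchar}.i; combined with Lemma \ref{69} this gives $\Findim(AK) = \spli(AK) = \sup\{\Gpd_{AK}(M): M\in\module AK\}$, and a module with finite Gorenstein projective dimension has, a fortiori, Gorenstein projective dimension at most $\spli(AK) = \Findim(AK)$, which by definition of $\Findim$ is the supremum of finite \emph{projective} dimensions — and for a module $M$ with $\Gpd_{AK}(M)<\infty$ one checks $M$ admits a finite projective resolution by a dimension-shifting/syzygy argument using that $K$ is torsion-free (so that over $AK$, Gorenstein projectives of finite classical... ) — more cleanly, one invokes that over such $K$ the classes of modules of finite projective dimension and of finite Gorenstein projective dimension coincide, which is precisely \cite[Theorem 3.5.(2)]{talelli-pams}-type input now available in our generality through Lemma \ref{69}. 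Applying this with $M = A$ and taking suprema over finitely generated subgroups closes part (i). Part (ii) runs identically with $\Gpd \rightsquigarrow \Gfd$, $\pd \rightsquigarrow \fd$, $\spli \rightsquigarrow \sfli$, $\Findim \rightsquigarrow \ffdim$, $\cd \rightsquigarrow \hd$, using Theorem \ref{char-wchar}.iii, Corollary \ref{k-spli}.ii, and Lemma \ref{69}.ii.

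\textbf{Main obstacle.} The genuinely delicate point is Step 3: extracting, for a torsion-free $K$ in the hierarchy, the equality of finite Gorenstein and finite classical projective (flat) dimension of the trivial module. Everything else is bookkeeping with the invariants $\kappa$, $\spli$, $\sfli$, $\Findim$, $\ffdim$ already assembled in Theorem \ref{char-wchar}, Corollary \ref{k-spli} and Lemma \ref{69}; the torsion-freeness must be fed in exactly here, since without it $\Gcd_A$ can be finite while $\cd_A$ is infinite (e.g. finite groups).
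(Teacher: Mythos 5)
Your overall strategy points in the right direction, and Step 1 (checking $\mathscr{F}_{\phi,A}\subseteq\mathscr{X}_{\Char}$ so that Lemma \ref{69} can be invoked with $\mathscr{X}=\mathscr{F}_{\phi,A}$) is a sensible, even necessary, observation. But the ``crux'' you isolate in Step 3 rests on a claim that is false as stated: you assert that for a torsion-free $K$ in the hierarchy, \emph{every} module $M$ with $\Gpd_{AK}(M)<\infty$ has $\pd_{AK}(M)<\infty$, reading this off from $\Findim(AK)=\spli(AK)=\sup\{\Gpd_{AK}(M)\}$. That chain of equalities does not yield the implication: when $\spli(AK)<\infty$, Theorem \ref{char-wchar}.i says every $AK$-module has finite Gorenstein projective dimension, while of course not every $AK$-module has finite projective dimension (unless $AK$ has finite global dimension). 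The coincidence of the two finiteness classes is special to the trivial module $A$, and the mechanism by which torsion-freeness enters is different from what you describe: since $G$ is torsion-free, its only finite subgroup is trivial, so $\pd_{AH}(A)<\infty$ for every finite $H\leqslant G$; by the very definition of type $\Phi(\pd,\mathscr{F})$ this upgrades to $\pd_{AH}(A)<\infty$ for every $H\in\mathscr{F}_{\phi,A}$ with $H\leqslant G$; and then $\pd_{AG}(A)\leq\kappa_{\pd}(AG;\mathscr{F}_{\phi,A})$, which is finite by Lemma \ref{69}.i \emph{provided} $\spli(AG)<\infty$. That last finiteness is the step you never secure: it does not come from membership in $\mathscr{X}_{\Char}$ of the finitely generated subgroups (your claim that $K\in\hh\mathscr{F}_{\phi,A}$ forces $K\in\mathscr{X}_{\Char}$ is itself an unproved closure property), but from the hypothesis $\Gcd_A(G)<\infty$ via the external input \cite[Corollary 2.18.ii]{dds-ja} (and \cite[Corollary 6.2]{ks} for the flat case), which is absent from your argument.

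Two further remarks. First, your detour through finitely generated subgroups and the identity $\cd_A(G)=\sup_K\cd_A(K)$ is unnecessary: Lemma \ref{69} is stated for $G\in\LH\mathscr{X}$ directly, so one can work with $G$ itself. Second, once $\cd_A(G)<\infty$ is established, the equality $\Gcd_A(G)=\cd_A(G)$ follows from the standard fact that a module of finite projective dimension has Gorenstein projective dimension equal to its projective dimension; your Step 2 implicitly uses this and is fine. The repair, then, is to replace Step 3 entirely: deduce $\spli(AG)<\infty$ from $\Gcd_A(G)<\infty$, apply Lemma \ref{69}.i to get $\kappa_{\pd}(AG;\mathscr{F}_{\phi,A})<\infty$, and feed in the trivial module $A$ using torsion-freeness exactly as above.
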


\begin{proof}
\begin{itemize} \item[i.] If $\Gcd_A(G)$ is not finite, $\cd_A(G)$ cannot be finite, and when $\Gcd_A(G)<\infty$, we will be done if we can show that $\cd_A(G)<\infty$. Note that when $\Gcd_A(G)<\infty$,  $\spli(AG)<\infty$ by \cite[Corollary 2.18.ii]{dds-ja}. So, by Lemma \ref{69}.i., $\kappa_{\pd}(AG;\mathscr{F}_{\phi,A})<\infty$. Now, the $AG$-module $A$ has finite projective dimension over all finite subgroups of $G$ (as $G$ is torsion-free), and so it also has finite projective dimension over all $\mathscr{F}_{\phi,A}$-subgroups of $G$, which means $\cd_A(G)<\infty$ as $\kappa_{\pd}(AG;\mathscr{F}_{\phi,A})<\infty$, and we are done.

\item[ii.] This is proved in the same way as (i) by using Lemma \ref{69}.ii instead of Lemma \ref{69}.i, and \cite[Corollary 6.2]{ks} instead of \cite[Corollary 2.18.ii]{dds-ja}.

\end{itemize}
\end{proof}

\noindent \textit{Acknowledgements.}  The research project is implemented in the framework of  H.F.R.I. Call “Basic research Financing Horizontal support of all Sciences” under the National Recovery and Resilience Plan “Greece 2.0” funded by the European Union Next Generation EU, H.F.R.I. Project Number: 14907.  
\begin{center}
	\includegraphics[scale=0.8]{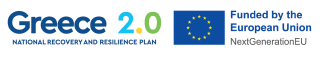}
	\hskip 1cm
	\includegraphics[scale=0.05]{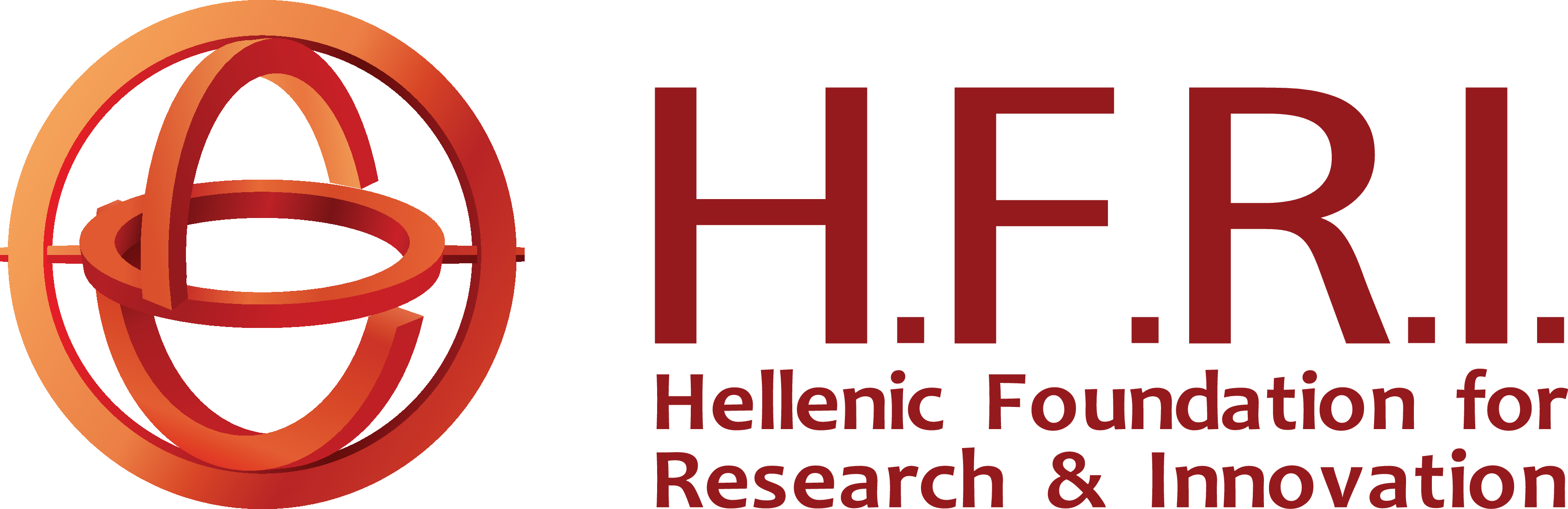}
\end{center}

\end{document}